\documentclass[11pt]{article}
\usepackage{amsmath,amssymb,graphicx,float,epsf}
\usepackage{multirow}
\usepackage[table,xcdraw]{xcolor}
\usepackage{longtable}
\usepackage{colortbl} 
\def\RR{\hbox{I\kern-.2em\hbox{R}}}
\newcommand{\qed}{\hbox to 0pt{}\hfill$\rlap{$\sqcap$}\sqcup$ \vspace{3mm}}
\numberwithin{equation}{section}
\restylefloat{figure}
\usepackage{graphicx} 
\usepackage{amsmath}
\usepackage{amsthm}
\usepackage{amsfonts}
\usepackage{ mathrsfs }
\usepackage[caption = false]{subfig}
\usepackage{wrapfig}
\usepackage{enumerate}
\usepackage{enumitem}
\usepackage{array}
\usepackage{subfig}
\usepackage[margin=1cm]{caption}
\usepackage[margin=1in]{geometry}
\usepackage{authblk}

\usepackage[utf8]{inputenc}
\newtheorem{theorem}{Theorem}[section]
\newtheorem{lemma}[theorem]{Lemma}

\theoremstyle{definition}
\newtheorem{definition}[theorem]{Definition}
\theoremstyle{remark}

\usepackage{graphicx}
\usepackage{hyperref}
\hypersetup{
	colorlinks=true,
	linkcolor=blue,
	filecolor=magenta,
	urlcolor=cyan,
	citecolor=blue,
}
\date{}
\graphicspath{ {Images/} }
\begin{document}
	
	\title{The Prime Digit Distribution Conjecture: A Formal Proof of Average Digit Equidistribution in the Prime Numbers}
	\author[1]{\small Mahadee Al Mobin\footnote{mahadeealmobin@gmail.com} }
	\author[2]{\small Md. Shariful Islam \thanks{Corresponding Author: mdsharifulislam@du.ac.bd}}


	 \affil[1]{\footnotesize Bangladesh Institute of Governance and Management, Dhaka 1207, Bangladesh}
	\affil[2]{\footnotesize Department of Mathematics, University of Dhaka, Dhaka 1000, Bangladesh}
   

	\maketitle
	
	\vspace{-1.0cm}
	\noindent\rule{6.35in}{0.02in}\\
	{\bf Abstract.}
	Let $S_n=\{p\in\mathbb{P}:p<10^n\}$, $N_n$ denote the total number of decimal digits occurring in the primes of $S_n$, $C_n(d)$ be the number of occurrences of a digit $d\in\{0,\ldots,9\}$ among those digits, and $P_n(d)$ be the probability of occurrence of a digit, $d$ among those digits. We prove that
	\[
	P_n(d)=\frac{C_n(d)}{N_n}
	=\frac{1}{10}
	+O\!\left(\frac{\log n}{n}\right),
	\qquad n\to\infty,
	\]
	uniformly for every decimal digit $d$. The argument is entirely unconditional and combines the Prime Number Theorem, the Erd\H{o}s--Tur\'an discrepancy inequality, and classical Vaughan--Vinogradov estimates for exponential sums over primes. The principal step establishes quantitative equidistribution for interior digit positions, while the logarithmically many exceptional positions near the ends of the decimal expansion are shown to have asymptotically negligible influence after averaging over all digit positions and prime lengths. Consequently, the decimal digits occurring in primes, when pooled over all positions and all primes below $10^n$, become asymptotically equidistributed. We also clarify the precise scope of the theorem by distinguishing this averaged equidistribution result from the substantially stronger and presently unresolved questions concerning pointwise digit equidistribution, normality, and higher-order digit correlations in the sequence of prime numbers.\\

	\noindent{\it \footnotesize Keywords}: {\small 
		Sum of digits; 
		Prime digit distribution conjecture; 
		Prime numbers,
		Decimal digit distribution;
		Equidistribution;
		Exponential sums;
		Erd\H{o}s--Tur\'an inequality;
		Vaughan estimates;
		Discrepancy theory}\\
	\noindent
	    \noindent{\it \footnotesize AMS Subject Classification 2020}: 11A63; 11N05; 11N13; 11K38; 11L07. \\
	\rule{6.35in}{0.02in}
	
	\section{Introduction}\label{introduction}
	The digits of the prime numbers, when examined in a fixed base $q \geq 2$, exhibit a
	remarkable and initially counterintuitive regularity. Although the primes are defined
	through a purely multiplicative condition (the absence of nontrivial divisors) their
	base-$q$ digit expansions display statistical properties that resemble those of uniformly distributed random digits. Establishing this
	behaviour rigorously has occupied analytic number theorists for over half a century, and
	it sits at a genuinely difficult crossing point between multiplicative number theory,
	which controls the primes, and additive/combinatorial structures, which govern how
	digits interact under carrying. The present manuscript is concerned with this problem in
	its classical and best-understood form: the equidistribution, among residue classes, of
	the sum-of-digits function evaluated at the primes.\\
	
	
	The modern form of the problem dates to 1968, when Gelfond, in a short but highly
	influential note, proposed a set of four problems on the digits of prime numbers and, more
	generally, of values of polynomials and other arithmetic sequences~\cite{Gelfond1968}. Two
	of these problems concern the base-$q$ sum-of-digits function $s_q(n)$: whether $s_q(p)$
	is equidistributed modulo $m$ as $p$ ranges over primes, and whether the sets of primes
	with even and odd digit sum have equal asymptotic density. The latter is simply the case
	$m=2$ of the former, and both may be viewed as instances of a broader question of whether
	digit sums, an additive and combinatorially defined statistic, are insensitive to the
	highly multiplicative constraint of primality.\\
	
	The difficulty of Gelfond's problems is not superficial. A naive probabilistic heuristic,
	in which the digits of a ``random'' prime are treated as independent and uniform, correctly
	predicts equidistribution of $s_q(p)$, but no proof along these lines is available, because
	primality is fundamentally a multiplicative condition while $s_q(n)$ is governed by carries
	under addition, and the two structures do not interact in any simple way. Compounding this,
	digit sums are entangled with elementary divisibility tests (for instance $s_{10}(n) \equiv
	n \pmod 3$) so that naive equidistribution statements are already visibly false for
	composite $n$ restricted to special residue classes, and any proof for primes must isolate
	genuine multiplicative cancellation rather than relying on such coincidences. For several
	decades following Gelfond's note, results were confined to the ``easy'' side of the
	problem: equidistribution of $s_q(n)$ in residue classes had been obtained for polynomial
	sequences and other structured sets of integers, and Gelfond's own paper already handled a
	version of the problem for general integers, but the case of primes, where one must fold
	in delicate cancellation from the von Mangoldt function remained open.\\
	
	
	Before the prime case was resolved, the analogous equidistribution question was understood
	comparatively well for other arithmetic sequences. Kim established estimates for the joint
	distribution of $q$-additive functions, including sums of digits taken with respect to
	several pairwise coprime bases, in residue classes, thereby confirming an instance of
	Gelfond's conjectural framework outside the prime setting~\cite{Kim1999}. This line of work
	made clear that the residue-class equidistribution of digit sums was a robust phenomenon for
	``generic'' integer sequences, and that the genuine obstruction in Gelfond's original
	problems was specifically the multiplicative rigidity of the primes. Madritsch later placed
	this circle of ideas in the broader context of canonical number systems, tracing the
	digit-sum equidistribution question back to Gelfond's original formulation and extending it
	beyond the standard base-$q$ representation~\cite{Madritsch2012}.\\
	
	The prime case of Gelfond's conjecture was resolved by Mauduit and Rivat in a 2010 paper
	that is now regarded as a landmark of modern analytic number theory. Working with the
	base-$q$ sum-of-digits function $s_q(n)$, they proved a quantitative equidistribution
	theorem for $s_q(p)$ as $p$ ranges over primes up to $x$, establishing both a central limit
	theorem and a local limit theorem for this statistic and thereby resolving, in particular,
	the even/odd digit-sum problem as a special case~\cite{MauduitRivat2010}. An earlier paper
	of the same authors, appearing in Compositio Mathematica in 2009, had already obtained the
	weaker but historically important result that the sum of digits of primes is, on average,
	close to its expected value, and this paper is frequently cited as the first genuine
	breakthrough on Gelfond's conjecture for primes~\cite{Drmota2009}. A companion paper of
	Drmota, Mauduit and Rivat subsequently extended the underlying digit-sum machinery to
	polynomial subsequences of the integers, indicating the flexibility of the method beyond the
	primes themselves~\cite{Drmota2011}.\\
	
	The proof strategy introduced by Mauduit and Rivat has become the template for essentially
	all subsequent work in this area, and it is worth recording its principal components. The
	central object is an exponential sum over primes of the form
	\[
	S(\alpha) \;=\; \sum_{p \le x} e\bigl(\alpha\, s_q(p)\bigr), \qquad e(y) := e^{2\pi i y},
	\]
	and the equidistribution of $s_q(p)$ modulo $m$ follows once $S(\alpha)$ is shown to be
	suitably small for $\alpha$ away from rational points with small denominator. Two further
	ingredients are needed to bound $S(\alpha)$. First, Vaughan's identity is used to decompose
	the von Mangoldt function $\Lambda(n)$ into a combination of smoother convolution sums (so-called ``Type I'' and ``Type II'' sums) that are more tractable than the primes themselves, a device with a long pedigree in the study of exponential sums over primes in	arithmetic progressions~\cite{BalogPerelli1985}. Second, the sum-of-digits function must be
	analyzed through its Fourier expansion, exploiting its automatic, self-similar structure
	under base-$q$ digit truncation; the requisite bounds on this Fourier structure build on
	foundational estimates of Gelfond and K\'atai and were substantially sharpened by Drmota,
	Mauduit and Rivat in the course of proving the main theorem. The combination of a
	sieve-theoretic decomposition of the primes with harmonic analysis of a digitally-defined
	automatic sequence is the technical signature of the entire subsequent literature.\\
	
	The Mauduit--Rivat framework has proved remarkably extensible. Aloui, Mauduit and Mkaouar
	studied correlations of the sum-of-digits function along the primes, going beyond
	equidistribution to quantify higher-order dependence~\cite{Aloui2021}. Drmota, Mauduit and
	Rivat later proved a prime number theorem for digital properties expressed simultaneously in
	two coprime bases, a substantially more delicate two-dimensional generalization of the
	original one-base result~\cite{Drmota2020}. The method has also been transported from the
	sum-of-digits function to other automatic sequences: Mauduit and Rivat established a prime
	number theorem along the Rudin--Shapiro sequence, relying on Fourier estimates and
	M\"obius-orthogonality arguments in the spirit of their earlier work~\cite{MauduitRivat2015},
	and Drappeau and Mullner developed exponential sum estimates for automatic sequences more
	generally, placing the Rudin--Shapiro and digit-sum cases within a common
	framework~\cite{DrappeauMullner2017}. Most recently, Drmota and Rivat have surveyed the
	state of Gelfond's conjectures on primes and on squares of primes, describing the resolution
	of these problems as accumulated over roughly two decades of work by multiple
	authors~\cite{DrmotaRivat2025}, and related results have been obtained for digital functions
	along squares of primes~\cite{DrmotaRivat2025} and for representations of primes as
	sums of Fibonacci numbers~\cite{DrmotaSpiegelhofer2025}.\\
	
	A parallel and closely related literature addresses primes whose digits are constrained
	\emph{a priori}, rather than the statistical distribution of an unconstrained digit
	functional. Maynard proved, via the Hardy--Littlewood circle method combined with
	Type~I/II information and Harman's sieve, that there are infinitely many primes omitting any
	single prescribed digit in a given base~\cite{Maynard2016}. Bourgain had earlier obtained
	asymptotic formulas for primes with a prescribed set of binary digits under suitable density
	hypotheses~\cite{Bourgain2013}, and Swaenepoel generalized this to primes with a positive
	proportion of preassigned digits in an arbitrary base~\cite{Swaenepoel2019}. Nath proved
	Bombieri--Vinogradov-type theorems for primes with a missing digit in a large base, using
	the circle method together with Fourier analysis of the digit-restricted set and exponential
	sums over primes in arithmetic progressions~\cite{Nath2021}. These results demonstrate that
	the exponential-sum methodology of Mauduit--Rivat, suitably adapted, controls primes under
	nontrivial digital constraints; however, they are properly understood as existence and
	distribution theorems for structured subsets of the primes, and they should not be
	conflated with a statement that prime digits are uniformly and independently distributed
	digit-by-digit across every position. As emphasized in recent literature surveys \cite{mobin2024cryptanalysis, al2026sieve}, the
	stronger claim, that every decimal digit occurs with limiting frequency $1/10$ once all
	digit positions of a prime are pooled together, remains, to date, an open problem rather
	than a theorem, and is logically distinct from the residue-class equidistribution of the
	\emph{sum} of digits that is the subject of the Mauduit--Rivat theorem and of the present
	manuscript. The asymptotic frequency of individual digits, aggregated over all digit positions and all primes below a growing bound, is a logically distinct problem that is not addressed by those results. The present work establishes such an averaged digit-equidistribution theorem by combining classical discrepancy estimates with unconditional exponential-sum bounds over the primes.\\
	
	\section{Construction of the Problem}
	
	\begin{definition}
		For $m\ge 1$ let
		\[
		T_m=\{p\ \text{prime}: 10^{m-1}\le p<10^m\}, \qquad \pi_m:=|T_m|=\pi(10^m)-\pi(10^{m-1}),
		\]
		the set of primes with exactly $m$ decimal digits. Every $p\in T_m$ has a digit expansion
		\[
		p=\sum_{k=0}^{m-1} d_k(p)\,10^k, \qquad d_k(p)\in\{0,\dots,9\},\ d_{m-1}(p)\neq 0,
		\]
		and we call $k$ the \emph{position} ($k=0$: units; $k=m-1$: leading digit).
		Set $M_m:=m\,\pi_m$, the total digit mass of the shell $T_m$, and for a digit $d$ and
		position $0\le k\le m-1$,
		\[
		C_m^{(k)}(d):=\#\{p\in T_m : d_k(p)=d\}, \qquad C_m(d):=\sum_{k=0}^{m-1} C_m^{(k)}(d).
		\]
		Finally $N_n=\sum_{m=1}^n M_m$ and $C_n(d)=\sum_{m=1}^n C_m(d)$, matching the quantities in
		the Introduction.
	\end{definition}
	
	\begin{theorem}[Main result]\label{thm:main}
		For every $d\in\{0,1,\dots,9\}$,
		\[
		P_n(d)=\frac{C_n(d)}{N_n}=\frac1{10}+O\!\left(\frac{\log n}{n}\right)\qquad(n\to\infty),
		\]
		the implied constant being absolute. In particular $\displaystyle \lim_{n\to\infty}\|p_n-u\|_{TV}=0$.
	\end{theorem}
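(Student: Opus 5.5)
The plan is to reduce Theorem~\ref{thm:main} to a per-shell, per-position statement and then average. Since $N_n=\sum_{m\le n} m\pi_m$ and $C_n(d)=\sum_{m\le n}\sum_{k<m}C_m^{(k)}(d)$, it suffices to show that for each shell $T_m$ all but $O(\log m)$ positions $k$ satisfy
\[
C_m^{(k)}(d)=\tfrac{1}{10}\pi_m+O\!\left(\pi_m\, m^{-A}\right),
\]
with $A\ge 1$ fixed. The remaining positions receive the trivial bound $|C_m^{(k)}(d)-\pi_m/10|\le\pi_m$. Summing these gives $|C_m(d)-M_m/10|\ll \pi_m\log m$. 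The Prime Number Theorem gives $\pi_m\asymp 10^m/m$, so the shells grow geometrically and the sum over $m\le n$ is dominated by the top shells. Hence
\[
\Bigl|C_n(d)-\tfrac{N_n}{10}\Bigr|\ll\sum_{m\le n}\pi_m\log m\ll \pi_n\log n,
\qquad N_n\asymp n\pi_n,
\]
which gives $P_n(d)=1/10+O(\log n/n)$ uniformly in $d$. The total-variation statement follows by summing over the ten digits.

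For the interior positions I use the criterion $d_k(p)=d$ if and only if $\{p/10^{k+1}\}\in[d/10,(d+1)/10)$. This reduces the problem to the discrepancy $D_m^{(k)}$ of the sequence $p/10^{k+1}$, $p\in T_m$, with $C_m^{(k)}(d)=\pi_m/10+O(\pi_m D_m^{(k)})$. By the Erd\H{o}s--Tur\'an inequality with parameter $H$,
\[
D_m^{(k)}\ll \frac1H+\sum_{h=1}^{H}\frac1h\,\frac{1}{\pi_m}\Bigl|\sum_{p\in T_m}e\!\left(\frac{hp}{10^{k+1}}\right)\Bigr|.
\]
The frequency is $\alpha=h/10^{k+1}$, rational with denominator $q=10^{k+1}/\gcd(h,10^{k+1})$.

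I then split by the size of $q$.

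\textbf{Positions near the top,} where $10^{k+1}$ is comparable to $x=10^m$ up to a factor $m^{B}$. Here $\alpha$ is very small, and partial summation against the Prime Number Theorem with de~la~Vall\'ee~Poussin error term gives the bound. More simply, these positions number only $O(\log m)$, so I put them in the exceptional set. The leading digit, which is genuinely non-uniform, is among them.

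\textbf{Positions near the bottom,} where $q\le m^{B}$. Here the sum $\sum_p e(hp/q)$ is governed by the Siegel--Walfisz theorem for primes in progressions mod $q$. It equals $\pi_m\mu(q)/\varphi(q)+O(\pi_m m^{-A})$, which is small unless $q$ is tiny. Small $q$ occurs only for $O(\log m)$ positions $k$, or for the terms with $h$ highly divisible by $10$. Those terms are weighted by $1/h$ and are controlled once $H$ is taken to be a fixed power of $m$.

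\textbf{Genuine interior,} $m^{B}<q<x/m^{B}$. Vaughan's bound
\[
\sum_{p\le x}e(\alpha p)\log p\ll (\log x)^4\left(\frac{x}{\sqrt q}+x^{4/5}+\sqrt{xq}\right)
\]
applies directly with $|\alpha-a/q|\le q^{-2}$, and after partial summation saves a power of $m$.

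I expect the main obstacle to be the bookkeeping in the Erd\H{o}s--Tur\'an sum. For a fixed interior $k$, the denominators $q_h$ vary with $h$, and some of them fall into the small range when $h$ shares many factors of $2$ and $5$ with $10^{k+1}$. I would handle this by grouping the $h$ according to $g=\gcd(h,10^{k+1})$. For each such group I apply Siegel--Walfisz or Vaughan to the reduced denominator $10^{k+1}/g$, and the weight $\sum_{h\equiv 0\ (g)}1/h\ll(\log H)/g$ makes the large-$g$ contributions negligible.

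A cleaner alternative is available for positions whose lower block is large. By periodicity, $d_k(p)=d$ depends only on $p \bmod 10^{k+1}$. The count is then a sum, over residues whose $k$-th digit is $d$, of the number of primes in that residue class. For moduli $10^{k+1}\le x^{1/2-\epsilon}$, a Bombieri--Vinogradov-type average over those residues would serve as a check on the result.

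With $A=1$ and $B$ fixed, the exceptional set has size $O(\log m)$ per shell, which is exactly what the averaging step requires.
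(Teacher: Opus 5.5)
Your proposal is correct and is essentially the paper's proof. Both arguments run the same way:
\begin{itemize}
\item apply Erd\H{o}s--Tur\'an at each digit position;
\item use Vaughan's bound for $\sum_{p\in T_m}e(hp/10^{k+1})$ once the reduced denominator lies between a power of $\log Y$ and $Y/(\log Y)^{O(1)}$;
\item bound the $O(\log m)$ remaining positions of each shell trivially;
\item use the geometric growth of the shells to turn the $O(\pi_m\log m)$ per-shell error into $O(\log n/n)$.
\end{itemize}

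Your Siegel--Walfisz regime and the grouping by $g=\gcd(h,10^{k+1})$ are not needed. Since $g\le h\le H$, you already have $q_h\ge 10^{k+1}/H$, and this is large for every $k\ge C\log m$ when $H$ is a fixed power of $m$. The paper builds this in by taking $H=10^{k+1}/(\log Y)^{2A+8}$. The bottom $O(\log m)$ positions can then simply be discarded.
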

	
	This is proved in four stages: (\S\ref{sec:interior}) an unconditional equidistribution
	estimate for a single digit position, valid uniformly for all positions $k$ outside a
	narrow band near the two ends of the number; (\S\ref{sec:shell}) dilution of that band,
	giving a clean estimate for a single shell $T_m$; (\S\ref{sec:assembly}) summation across
	shells, where geometric growth of $M_m$ makes the top shell dominate; and finally
	the Conclusion, which states precisely what this does and does not establish.
	
	\section{The Interior Digit Lemma}\label{sec:interior}
	
	Fix $m$, a position $0\le k\le m-2$ (interior, the argument below covers $k=m-1$ too,
	but that case is already disposed of more crudely in \S\ref{sec:shell}), and a digit $d$.
	Let $Y=10^{m-1}$ and $q=10^{k+1}$. For $p\in T_m$,
	\[
	d_k(p)=d \iff \Big\{\frac{p}{q}\Big\}\in I_d:=\Big[\frac{d}{10},\frac{d+1}{10}\Big),
	\]
	where $\{\cdot\}$ denotes fractional part; note $|I_d|=1/10$ independent of $k$, which is
	exactly why $q=10^{k+1}$ is the right modulus to work with.
	
	\subsection{Erd\H{o}s--Tur\'an}
	
	\begin{lemma}[Erd\H{o}s--Tur\'an inequality, applied form]\label{lem:et}
		There is an absolute constant $c_0$ such that for every $H\ge 1$,
		\[
		\Big|\,C_m^{(k)}(d)-\frac{\pi_m}{10}\,\Big|
		\ \le\ \frac{c_0\,\pi_m}{H+1}\ +\ 3\sum_{h=1}^{H}\frac1h\,\big|S_m(h,q)\big|,
		\]
		where
		\[
		S_m(h,q):=\sum_{p\in T_m} e\!\left(\frac{hp}{q}\right).
		\]
	\end{lemma}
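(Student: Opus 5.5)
We apply the classical Erd\H{o}s--Tur\'an inequality to the finite sequence $x_p=\{p/q\}$, $p\in T_m$, of $N=\pi_m$ points in $[0,1)$. In its standard form it states that for any $H\ge 1$ the discrepancy
\[
D_N:=\sup_{0\le a<b\le 1}\Big|\frac{\#\{p\in T_m: x_p\in[a,b)\}}{N}-(b-a)\Big|
\]
satisfies
\[
D_N\le \frac{C_1}{H+1}+C_2\sum_{h=1}^H\frac1h\Big|\frac1N\sum_{p\in T_m}e(hx_p)\Big|,
\]
with absolute constants. In the Montgomery form one may take $C_1=1$ and $C_2=3$, since $D_N\le \frac{1}{H+1}+\frac{2}{\pi}\sum_{h\le H}\big(\frac1h+\frac{1}{H+1}\big)|\cdot|$ and $\frac{2}{\pi}\cdot 2<3$. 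We will cite this (e.g.\ Kuipers--Niederreiter or Montgomery's \emph{Ten Lectures}) rather than reprove it.

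The only observation needed is that $e(hx_p)=e(h\{p/q\})=e(hp/q)$, because $h$ is an integer and $e$ has period $1$. Hence the inner normalized sum equals $S_m(h,q)/\pi_m$.

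Next, specialize to the interval $[a,b)=I_d$, which has length $1/10$. The count of $p$ with $x_p\in I_d$ is exactly $C_m^{(k)}(d)$ by the equivalence displayed before the lemma. Therefore
\[
\Big|C_m^{(k)}(d)-\frac{\pi_m}{10}\Big|\le \pi_m D_N.
\]
Multiplying the Erd\H{o}s--Tur\'an bound by $\pi_m$ gives the claimed inequality with $c_0=C_1$ and the coefficient $3$ in front of the sum.

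If one wishes to avoid caring about the precise constant $C_2$, the cleanest route is the Beurling--Selberg (Vaaler) majorant and minorant trigonometric polynomials of degree $H$ for the indicator of $I_d$. Their Fourier coefficients are bounded by $\frac{1}{H+1}+\min(\frac1{10},\frac{1}{\pi|h|})$, which yields $\frac1h$-weights with constant at most $3$ for $1\le h\le H$. The conjugate terms $h<0$ are handled via $|S_m(-h,q)|=|S_m(h,q)|$.

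The only mild obstacle is bookkeeping the absolute constant in front of the sum so that it is exactly $3$. Merging the negative frequencies with the positive ones doubles the Vaaler coefficient bound. Since $\frac{1}{H+1}\le\frac1h$ for $h\le H$, each coefficient is at most $2\big(\frac{1}{H+1}+\frac{1}{\pi h}\big)\le 2\big(1+\frac1\pi\big)\frac1h<3\cdot\frac1h$, which is exactly what is needed. No arithmetic input about primes enters this step; it is purely a statement about finite point sets.
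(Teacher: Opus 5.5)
Your proposal is correct and follows the same route as the paper: apply the Erd\H{o}s--Tur\'an inequality to the points $x_p=\{p/q\}$, use $e(hx_p)=e(hp/q)$, bound $|C_m^{(k)}(d)-\pi_m/10|$ by $\pi_m D_N$ via the interval $I_d$, and multiply through by $\pi_m$. One harmless slip: the Vaaler-based form has coefficient $2\bigl(\frac{1}{H+1}+\frac{1}{\pi h}\bigr)$, not $\frac{2}{\pi}\bigl(\frac1h+\frac{1}{H+1}\bigr)$ as in your first parenthetical; your last paragraph uses the correct version, and $2(1+\frac1\pi)<3$ still gives the constant $3$.
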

	
	\begin{proof}
		For each prime $p\in T_m$, define
		\[
		x_p:=\left\{\frac{p}{q}\right\},
		\]
		where $\{\cdot\}$ denotes the fractional part and $q=10^{k+1}$. As observed above,
		\[
		d_k(p)=d
		\iff
		x_p\in I_d,
		\]
		where
		\[
		I_d=\left[\frac{d}{10},\frac{d+1}{10}\right),
		\qquad |I_d|=\frac1{10}.
		\]
		Hence
		\[
		C_m^{(k)}(d)
		=
		\#\{p\in T_m:x_p\in I_d\}.
		\]
		
		and,
		\[
		N=\pi_m=|T_m|.
		\]
		If $D_N$ denotes the discrepancy of the sequence
		\[
		\{x_p:p\in T_m\}\subset[0,1),
		\]
		then, by the definition of discrepancy \cite{DrmotaSpiegelhofer2025},
		\[
		\left|
		\#\{p\in T_m:x_p\in I_d\}
		-
		N|I_d|
		\right|
		=
		\left|
		C_m^{(k)}(d)-\frac{\pi_m}{10}
		\right|
		\le
		ND_N.
		\]
		
		We now apply the classical Erd\H{o}s--Tur\'an discrepancy inequality \cite{montgomery1994ten}, which states that for every integer
		$H\ge1$,
		\[
		D_N
		\le
		\frac{c_0}{H+1}
		+
		\frac{3}{N}
		\sum_{h=1}^{H}
		\frac1h
		\left|
		\sum_{n=1}^{N}
		e(hx_n)
		\right|,
		\]
		where $e(x)=e^{2\pi i x}$ and $c_0>0$ is an absolute constant.
		
		Multiplying both sides by $N=\pi_m$ gives
		\[
		ND_N
		\le
		\frac{c_0\pi_m}{H+1}
		+
		3
		\sum_{h=1}^{H}
		\frac1h
		\left|
		\sum_{p\in T_m}
		e\!\left(\frac{hp}{q}\right)
		\right|.
		\]
		Let,
		\[
		S_m(h,q)
		=
		\sum_{p\in T_m}
		e\!\left(\frac{hp}{q}\right),
		\]
		we conclude that
		\[
		\left|
		C_m^{(k)}(d)-\frac{\pi_m}{10}
		\right|
		\le
		\frac{c_0\pi_m}{H+1}
		+
		3
		\sum_{h=1}^{H}
		\frac1h
		|S_m(h,q)|,
		\]
		which is precisely the desired estimate.
	\end{proof}
	
	\subsection{The Vaughan/Vinogradov bound over primes}
	
	\begin{lemma}[Classical exponential sum bound]\label{lem:vaughan}
		Let $a/r$ be $h/q$ in lowest terms. Uniformly for $1\le r\le Y$,
		\[
		S_m(h,q)\ \ll\ \left(\frac{Y}{\sqrt r}+Y^{4/5}+\sqrt{Yr}\right)(\log Y)^4.
		\]
	\end{lemma}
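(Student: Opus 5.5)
We reduce the sum over $p\in T_m$ to the standard Vinogradov--Vaughan estimate for $\sum_{n\le x}\Lambda(n)e(\alpha n)$ at a rational point $\alpha=a/r$ with $(a,r)=1$. That bound, in the form found e.g. in Davenport's \emph{Multiplicative Number Theory} (Theorem~25.2) or Iwaniec--Kowalski (Theorem~13.6), is
\[
\sum_{n\le x}\Lambda(n)\,e\!\left(\frac{an}{r}\right)\ll\left(\frac{x}{\sqrt r}+x^{4/5}+\sqrt{xr}\right)(\log x)^4 .
\]
It is proved by applying Vaughan's identity with parameters $U=V=x^{2/5}$. The Type~I sums are handled by the geometric-series bound $\sum_{n\le N}e(\alpha \ell n)\ll\min(N,\|\alpha\ell\|^{-1})$ together with the standard lemma $\sum_{\ell\le L}\min(x/\ell,\|\alpha \ell\|^{-1})\ll(x/r+L+r)\log(2rx)$. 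The Type~II bilinear sums are handled by Cauchy--Schwarz followed by the same lemma. This produces exactly the three terms $x/\sqrt r$, $x^{4/5}$ and $\sqrt{xr}$, up to the power of $\log$. Since the statement lets us take any classical result, we cite this bound and do not reprove it.

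The first step is to pass from $\Lambda$ to primes in a dyadic-type window. Write $T_m$ as the set of primes in $[Y,10Y)$, where $Y=10^{m-1}$. Since $e(hp/q)=e(ap/r)$, we have $S_m(h,q)=\sum_{Y\le p<10Y}e(ap/r)$. Set $\theta(x;\alpha)=\sum_{p\le x}(\log p)\,e(\alpha p)$. Prime powers $p^j$ with $j\ge2$ contribute only $O(\sqrt{x}\log x)$, which is absorbed into the $x^{4/5}$ term, so $\theta(x;a/r)$ satisfies the same bound as the $\Lambda$-sum. Applying this at $x=10Y$ and at $x=Y$ and subtracting controls $\sum_{Y\le p<10Y}(\log p)\,e(ap/r)$ by $(Y/\sqrt r+Y^{4/5}+\sqrt{Yr})(\log Y)^4$, since $10Y\asymp Y$ and $\log(10Y)\asymp\log Y$.

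The second step removes the weight $\log p$ by partial summation. Write $e(ap/r)=(\log p)e(ap/r)\cdot(1/\log p)$ and sum by parts over $[Y,10Y)$. This gives
\[
|S_m(h,q)|\le\frac{1}{\log Y}\max_{Y\le t\le10Y}\Big|\sum_{Y\le p<t}(\log p)e(ap/r)\Big|+\int_Y^{10Y}\frac{\max_t|\cdots|}{t\log^2t}\,dt .
\]
Both terms are at most the previous bound, since the extra factor $1/\log Y$ only helps. Uniformity in $1\le r\le Y$ is inherited from the classical estimate, which holds for all $r$; for $r\le Y$ the term $\sqrt{Yr}$ is at most $Y$, so no case analysis is needed. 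The implied constant is absolute because the base $10$ and the window ratio $10$ are fixed.

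The main obstacle is the Type~II estimate inside the classical bound. If one insisted on proving the lemma rather than citing it, the first thing to do would be to follow Davenport's Chapter~25 verbatim: choose $U=V=x^{2/5}$, split the bilinear range dyadically (which costs one $\log$), and apply the reciprocal-sum lemma. The only genuinely new point relative to the literature is the restriction to the shell $[Y,10Y)$ and the removal of $\log p$, and both of these are routine.
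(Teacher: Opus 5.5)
Your proposal is correct and follows the same route as the paper: both invoke Davenport's Vinogradov--Vaughan bound for $\sum_{n\le x}\Lambda(n)e(\alpha n)$ at the exact rational point $\alpha=h/q=a/r$, where the approximation condition $|\alpha-a/r|\le r^{-2}$ holds trivially. The paper applies that bound with $n=Y$ and then writes $S_m(h,q)<\sum_{p\in T_m}\log p\,e(hp/q)$ without further justification. Your additional steps supply exactly what that citation needs: differencing at $x=10Y$ and $x=Y$ to reach the shell $[Y,10Y)$, discarding prime powers, and partial summation to remove the weight $\log p$.
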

	
	\begin{proof}
		We know,
		
		If $a, r \in \mathbb{Z}^+$ with $(a,r)=1$, and $\alpha \in \mathbb{R}$ s.t. $\left|\alpha - \frac{a}{r}\right| \leq \frac{1}{r^2}$
		
		then $\sum_{x \leq n} \Lambda(x)e(\alpha x) = \sum_{p \leq n} \log p \ e(\alpha p) \ll \left(n^{1/2}r^{1/2} + n^{4/5} + n r^{-1/2}\right)(\log n)^4$ \cite{Davenport2013}\\
		
		Let $\alpha = \frac{h}{q}$ \& $n = Y$. By construction $n = Y$.
		
		$\because \frac{a}{r}$ is the lowest form of $\frac{h}{q}$ hence $(h,q)=l$
		
		\& $\frac{h}{q} = \frac{la}{lr} = \frac{a}{r} \therefore 0 = \left|\frac{h}{q} - \frac{a}{r}\right| \leq \frac{1}{r^2}$
		
		hence we can write,
		
		\[ S_m(h,q) < \sum_{p \in T_m} \log p \ e\left(\frac{hp}{q}\right) \ll \left(\frac{Y}{\sqrt{r}} + Y^{4/5} + \sqrt{Yr}\right)(\log Y)^4 \]
	\end{proof}
	
	\subsection{Choice of $H$ and the good range of $k$}\label{subsec:choice_of_h}
	
%
%

Fix $A\ge1$ and define
\[
H=H(A,m):=\frac{q}{(\log Y)^{2A+8}},
\]
where $Y=10^{m-1}$ and $q=10^{k+1}$. We show that this choice simultaneously controls the three terms in Lemma~\ref{lem:vaughan} while ensuring that the principal Erd\H{o}s--Tur\'an error remains sufficiently small.

For each integer $1\le h\le H$, write
\[
\frac{h}{q}=\frac{a}{r}
\]
in lowest terms. Since
\[
r=\frac{q}{\gcd(h,q)}
\]
and
\[
\gcd(h,q)\le h\le H,
\]
we obtain the lower bound
\[
r\ge\frac{q}{H}
=(\log Y)^{2A+8}.
\]
Consequently, the first term in Lemma~\ref{lem:vaughan} satisfies
\[
\frac{Y}{\sqrt r}
\le
\frac{Y}{(\log Y)^{A+4}}.
\]

The second term is independent of $k$. Since
\[
\frac{Y^{4/5}}{Y/(\log Y)^{A+4}}
=
\frac{(\log Y)^{A+4}}{Y^{1/5}}
\longrightarrow0
\qquad (Y\to\infty),
\]
we have
\[
Y^{4/5}
\ll
\frac{Y}{(\log Y)^{A+4}}
\]
for all sufficiently large $Y$.

It remains to control the third term. Since $r\le q$,
\[
\sqrt{Yr}\le\sqrt{Yq},
\]
and therefore it is sufficient to require
\[
\sqrt{Yq}
\le
\frac{Y}{(\log Y)^{A+4}},
\]
or equivalently,
\[
q
\le
\frac{Y}{(\log Y)^{2A+8}}.
\]
Recalling that $q=10^{k+1}$ and $Y=10^{m-1}$, this inequality is satisfied whenever
\[
k
\le
m-2-C_2(A)\log m
\]
for a suitable positive constant $C_2(A)$.

Hence, uniformly for every $h\le H$ and every digit position satisfying
\[
k\le m-2-C_2(A)\log m,
\]
Lemma~\ref{lem:vaughan} yields
\begin{equation}\label{eq:Sbound}
	S_m(h,q)
	\ll
	\frac{Y}{(\log Y)^{A+4}}.
\end{equation}

Finally, the principal term in the Erd\H{o}s--Tur\'an inequality contributes
\[
\frac{c_0\pi_m}{H+1}.
\]
To obtain the target error
\[
O\!\left(\frac{\pi_m}{(\log Y)^A}\right),
\]
it is necessary that
\[
H\gg(\log Y)^A.
\]
Substituting the definition of $H$ gives
\[
q
\gg
(\log Y)^{3A+8},
\]
Using the definition of $q$, this is equivalent to
\[
k\ge C_1(A)\log m
\]
for a suitable constant $C_1(A)>0$.

Combining the lower and upper constraints on $k$ yields the admissible range
\[
C_1(A)\log m
\le
k
\le
m-C_2(A)\log m,
\]
within which both the exponential-sum estimate and the Erd\H{o}s--Tur\'an truncation error simultaneously satisfy the required bounds. This is precisely the range used in the proof of Lemma~\ref{lem:interior}.
	
	\begin{lemma}[Interior digit equidistribution]\label{lem:interior}
		For every fixed $A\ge 1$ there are constants $C_1(A),C_2(A)>0$ such that, for all $m$
		sufficiently large and uniformly over
		\[
		C_1(A)\log m\ \le\ k\ \le\ m-C_2(A)\log m,
		\]
		and every digit $d$,
		\[
		C_m^{(k)}(d)=\frac{\pi_m}{10}+O_A\!\left(\frac{\pi_m}{(\log Y)^{A}}\right),\qquad Y=10^{m-1}.
		\]
	\end{lemma}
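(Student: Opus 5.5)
The plan is to combine Lemma~\ref{lem:et} with the exponential-sum bound \eqref{eq:Sbound}. We use the choice $H=q/(\log Y)^{2A+8}$ from \S\ref{subsec:choice_of_h} and compare the result with the size of $\pi_m$ given by the Prime Number Theorem. First, by the PNT, $\pi_m=\pi(10Y)-\pi(Y)\asymp Y/\log Y$. Therefore any error of size $Y(\log Y)^{-A-3}$ is $O(\pi_m(\log Y)^{-A-2})$, which is better than needed. We fix $A$ and take $C_1(A)$ large enough that $q=10^{k+1}\ge(\log Y)^{3A+8}$ throughout the range. This guarantees $H\ge(\log Y)^{A}\ge1$, so Lemma~\ref{lem:et} applies. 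It also makes the truncation term $c_0\pi_m/(H+1)$ equal to $O(\pi_m(\log Y)^{-A})$.

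Second, I would make \eqref{eq:Sbound} rigorous for the unweighted shell sum $S_m(h,q)$. The bound quoted in Lemma~\ref{lem:vaughan} is stated for $\sum_{x\le n}\Lambda(x)e(\alpha x)$. To pass to $S_m(h,q)$, I would write $F(t)=\sum_{x\le t}\Lambda(x)e(hx/q)$ and apply the Davenport bound at every $t\in[Y,10Y]$ with the same reduced denominator $r$. The upper constraint $k\le m-C_2(A)\log m$ gives $q\le Y(\log Y)^{-2A-8}$, and the lower bound $r\ge q/H=(\log Y)^{2A+8}$ holds. Together these give $F(t)\ll Y(\log Y)^{-A-4}$ uniformly in $t\le 10Y$. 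The contribution of prime powers to $F$ is $O(\sqrt{Y}\log Y)$ and can be discarded. Partial summation with weight $1/\log t$ over $[Y,10Y)$ then gives
\[
S_m(h,q)=\frac{F(10Y)}{\log 10Y}-\frac{F(Y)}{\log Y}+\int_Y^{10Y}\frac{F(t)}{t\log^2 t}\,dt+O(\sqrt Y)\ll \frac{Y}{(\log Y)^{A+5}}.
\]

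Third, I would insert this into Lemma~\ref{lem:et}:
\[
3\sum_{h\le H}\frac1h|S_m(h,q)|\ll \frac{Y\log H}{(\log Y)^{A+5}}\ll\frac{Y}{(\log Y)^{A+4}}\ll \frac{\pi_m}{(\log Y)^{A+3}},
\]
since $\log H\le\log q\ll\log Y$. Adding the truncation term gives $C_m^{(k)}(d)=\pi_m/10+O_A(\pi_m(\log Y)^{-A})$. Every implied constant depends only on $A$, so the estimate is uniform in $k$ in the stated range and in $d$. The conditions on $k$ translate into $C_1(A)\log m\le k\le m-C_2(A)\log m$, because $\log\log Y\asymp\log m$.

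I expect the main obstacle to be the second step: transferring the von Mangoldt-weighted bound to the unweighted sum over the shell $T_m$. This requires the Davenport estimate to hold uniformly for all cut-off points $t$ with the same Diophantine data $a/r$. It also requires checking that the term $\sqrt{tr}$ stays below $Y(\log Y)^{-A-4}$ when $t$ ranges up to $10Y$ rather than $Y$. Both points only cost a constant factor, but they must be stated carefully, since the argument in Lemma~\ref{lem:vaughan} passes over them.
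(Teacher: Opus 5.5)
Your plan follows the paper's proof: Erd\H{o}s--Tur\'an with $H=q/(\log Y)^{2A+8}$, the Davenport--Vaughan bound for $\sum\Lambda(n)e(\alpha n)$, and the PNT. Your partial-summation step improves on the paper. In the proof of Lemma~\ref{lem:vaughan}, the paper bounds the complex sum $S_m(h,q)$ by a ``$<$'' comparison with a $\log p$-weighted sum. It also applies the estimate at $n=Y$, although $T_m$ runs up to $10Y$. You instead integrate $F(t)$ against $1/\log t$ over $[Y,10Y]$, which handles the full range and correctly gains a factor $1/\log Y$.

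\textbf{The gap.} The sentence ``Together these give $F(t)\ll Y(\log Y)^{-A-4}$'' is wrong, because the Davenport bound carries the factor $(\log t)^4$.
\begin{itemize}
\item From $r\ge(\log Y)^{2A+8}$ you only get $t\,r^{-1/2}(\log t)^4\ll Y(\log Y)^{-A}$.
\item From $q\le Y(\log Y)^{-2A-8}$ you only get $\sqrt{tr}\,(\log t)^4\ll Y(\log Y)^{-A}$.
\end{itemize}
The paper's \eqref{eq:Sbound} drops the same factor.

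Carrying the correct bound through your argument:
\begin{itemize}
\item partial summation gives $S_m(h,q)\ll Y(\log Y)^{-A-1}$;
\item the $h$-sum costs a factor $\log H\asymp\log Y$ near the top of the range, so it is $\ll Y(\log Y)^{-A}\asymp\pi_m(\log Y)^{1-A}$.
\end{itemize}
This is one logarithm short of the target $O_A(\pi_m(\log Y)^{-A})$. The three powers of slack you reported came only from the lost $(\log t)^4$.

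\textbf{The repair.} The fix is cheap, because $C_1(A)$ and $C_2(A)$ are only asserted to exist. Replace $2A+8$ by $2A+10$ throughout:
\begin{itemize}
\item take $H=\lfloor q/(\log Y)^{2A+10}\rfloor$, which is an integer as Erd\H{o}s--Tur\'an requires;
\item restrict to $(\log Y)^{3A+10}\le q\le Y(\log Y)^{-2A-10}$.
\end{itemize}
Then:
\begin{itemize}
\item $r\ge(\log Y)^{2A+10}$;
\item $F(t)\ll Y(\log Y)^{-A-1}$ uniformly for $t\le 10Y$;
\item $S_m(h,q)\ll Y(\log Y)^{-A-2}+\sqrt Y$;
\item the $h$-sum is $\ll Y(\log Y)^{-A-1}\asymp\pi_m(\log Y)^{-A}$;
\item the truncation term satisfies $c_0\pi_m/(H+1)\le c_0\pi_m(\log Y)^{-A}$.
\end{itemize}
This proves the lemma with slightly larger constants $C_1(A)$ and $C_2(A)$.
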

	
	\begin{proof}
		By Lemma~\ref{lem:et},
		\[
		\left|
		C_m^{(k)}(d)-\frac{\pi_m}{10}
		\right|
		\le
		\frac{c_0\pi_m}{H+1}
		+
		3\sum_{h=1}^{H}
		\frac1h
		|S_m(h,q)|.
		\]
		It therefore suffices to show that each term on the right-hand side is
		\[
		O_A\!\left(\frac{\pi_m}{(\log Y)^A}\right).
		\]
		
		By the choice of the truncation parameter in Section~\ref{subsec:choice_of_h},
		\[
		H=\frac{q}{(\log Y)^{2A+8}},
		\]
		and the interior range
		\[
		k\ge C_1(A)\log m
		\]
		implies
		\[
		q\gg(\log Y)^{3A+8}.
		\]
		Consequently,
		\[
		H
		=
		\frac{q}{(\log Y)^{2A+8}}
		\gg
		(\log Y)^A,
		\]
		so that
		\[
		\frac{c_0\pi_m}{H+1}
		\ll
		\frac{\pi_m}{(\log Y)^A}.
		\]
		
		For the exponential-sum contribution, estimate~\eqref{eq:Sbound} gives, uniformly for every
		$1\le h\le H$,
		\[
		|S_m(h,q)|
		\ll
		\frac{Y}{(\log Y)^{A+4}}.
		\]
		Hence
		\[
		\sum_{h=1}^{H}
		\frac1h
		|S_m(h,q)|
		\ll
		\frac{Y}{(\log Y)^{A+4}}
		\sum_{h=1}^{H}\frac1h.
		\]
		Since
		\[
		\sum_{h=1}^{H}\frac1h
		=
		O(\log H)
		\]
		and
		\[
		H\le q\le Y,
		\]
		we have
		\[
		\log H\le\log Y,
		\]
		whence
		\[
		\sum_{h=1}^{H}
		\frac1h
		|S_m(h,q)|
		\ll
		\frac{Y}{(\log Y)^{A+3}}.
		\]
		
		Finally, the Prime Number Theorem implies
		\[
		\pi_m\asymp\frac{Y}{\log Y},
		\]
		or equivalently,
		\[
		Y\asymp\pi_m\log Y.
		\]
		Substituting this relation yields
		\[
		\frac{Y}{(\log Y)^{A+3}}
		\asymp
		\frac{\pi_m}{(\log Y)^{A+2}}
		\ll
		\frac{\pi_m}{(\log Y)^A}.
		\]
		Therefore,
		\[
		\sum_{h=1}^{H}
		\frac1h
		|S_m(h,q)|
		\ll
		\frac{\pi_m}{(\log Y)^A}.
		\]
		
		Substituting the above estimates into Lemma~\ref{lem:et} gives
		\[
		\left|
		C_m^{(k)}(d)-\frac{\pi_m}{10}
		\right|
		\ll
		\frac{\pi_m}{(\log Y)^A},
		\]
		or equivalently,
		\[
		C_m^{(k)}(d)
		=
		\frac{\pi_m}{10}
		+
		O\!\left(
		\frac{\pi_m}{(\log Y)^A}
		\right),
		\]
		which is the desired conclusion.
	\end{proof}

	\section{Diluting the Exceptional Band, One Shell at a Time}\label{sec:shell}
	
	Fix $m$ and $A=2$ (any fixed $A\ge 1$ would do; $A=2$ keeps the bookkeeping in
	\S\ref{sec:assembly} clean). Call a position $k\in\{0,\dots,m-1\}$ \emph{good} if
	$C_1(2)\log m\le k\le m-C_2(2)\log m$, and \emph{bad} otherwise. The number of bad
	positions is $O(\log m)$.
	
	\begin{lemma}[Single-shell estimate]\label{lem:shell}
		For every digit $d$ and all sufficiently large $m$,
		\[
		C_m(d)=\frac{M_m}{10}+O(\pi_m\log m).
		\]
	\end{lemma}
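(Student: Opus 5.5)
The plan is to split the sum $C_m(d)=\sum_{k=0}^{m-1}C_m^{(k)}(d)$ according to whether the position $k$ is good or bad, with $A=2$ fixed as above. I will handle the good positions with Lemma~\ref{lem:interior} and the bad positions with a trivial bound, then add the two contributions. Since $M_m=m\pi_m=\sum_{k=0}^{m-1}\pi_m$, the claimed estimate is equivalent to
\[
\sum_{k=0}^{m-1}\Bigl(C_m^{(k)}(d)-\frac{\pi_m}{10}\Bigr)=O(\pi_m\log m),
\]
so it suffices to bound each summand and count how many positions fall in each class.

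For the bad positions I use only the trivial bound. For every $k$ we have $0\le C_m^{(k)}(d)\le \pi_m$, so $\bigl|C_m^{(k)}(d)-\pi_m/10\bigr|\le \pi_m$. The bad positions are those with $k<C_1(2)\log m$ or $k>m-C_2(2)\log m$. Their number is at most $(C_1(2)+C_2(2))\log m+2=O(\log m)$. The leading position $k=m-1$, where $d_{m-1}(p)\neq0$ distorts the count, is among them. Hence the bad positions contribute $O(\pi_m\log m)$ in total.

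For the good positions, Lemma~\ref{lem:interior} with $A=2$ gives $\bigl|C_m^{(k)}(d)-\pi_m/10\bigr|\ll \pi_m/(\log Y)^2$ uniformly in $k$, where $Y=10^{m-1}$ and so $\log Y=(m-1)\log 10$. There are at most $m$ good positions, so their total contribution is
\[
\ll \frac{m\,\pi_m}{(m-1)^2}\ll \frac{\pi_m}{m},
\]
which is far smaller than $\pi_m\log m$. Adding the two contributions gives $C_m(d)=M_m/10+O(\pi_m\log m)$ for all sufficiently large $m$, with the implied constant depending only on $C_1(2)$, $C_2(2)$ and the constant in Lemma~\ref{lem:interior}.

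The main point to check is that the uniformity in Lemma~\ref{lem:interior} really holds. The implied constant must not depend on $k$ or $d$, because we sum it over up to $m$ positions. Granting the lemma as stated, it suffices that $A=2$ makes the factor $(\log Y)^{-2}\asymp m^{-2}$ absorb the factor $m$ coming from the number of positions. Any $A\ge1$ would also work, since $m\,\pi_m/(\log Y)^{A}\ll\pi_m$ already for $A=1$. Everything else is bookkeeping: the range of good positions may be empty for small $m$, which is why the statement is restricted to $m$ sufficiently large.
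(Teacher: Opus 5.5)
Your proposal is correct and is essentially the paper's own argument: fix $A=2$, apply Lemma~\ref{lem:interior} on the good positions, and use the trivial bound $0\le C_m^{(k)}(d)\le \pi_m$ on the $O(\log m)$ bad ones. Centering each summand at $\pi_m/10$ is slightly tidier than the paper's use of $|\mathcal G| = m+O(\log m)$, and it shows that the good positions contribute only $O(\pi_m/m)$, so the $O(\pi_m\log m)$ error comes entirely from the bad band.
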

	
%

	\begin{proof}
		We fix $A=2$ in Lemma~\ref{lem:interior}. Recall that
		\[
		C_m(d)
		=
		\sum_{k=0}^{m-1}C_m^{(k)}(d).
		\]
		
		Partition the digit positions into the good range
		\[
		\mathcal G
		=
		\left\{
		k:
		C_1\log m
		\le
		k
		\le
		m-C_2\log m
		\right\},
		\]
		and its complement $\mathcal B$. By construction,
		\[
		|\mathcal G|
		=
		m+O(\log m),
		\qquad
		|\mathcal B|
		=
		O(\log m).
		\]
		
		For every $k\in\mathcal G$, Lemma~\ref{lem:interior} yields
		\[
		C_m^{(k)}(d)
		=
		\frac{\pi_m}{10}
		+
		O\!\left(
		\frac{\pi_m}{(\log Y)^2}
		\right).
		\]
		Since
		\[
		Y=10^{m-1},
		\]
		we have
		\[
		\log Y\asymp m,
		\]
		and therefore
		\[
		C_m^{(k)}(d)
		=
		\frac{\pi_m}{10}
		+
		O\!\left(
		\frac{\pi_m}{m^2}
		\right).
		\]
		Summing over all good positions gives
		\[
		\sum_{k\in\mathcal G}
		C_m^{(k)}(d)
		=
		|\mathcal G|
		\frac{\pi_m}{10}
		+
		O\!\left(
		|\mathcal G|
		\frac{\pi_m}{m^2}
		\right)
		=
		\frac{m\pi_m}{10}
		+
		O(\pi_m\log m),
		\]
		since
		\[
		|\mathcal G|
		=
		m+O(\log m).
		\]
		
		For the remaining positions, we use only the trivial estimate
		\[
		0
		\le
		C_m^{(k)}(d)
		\le
		\pi_m.
		\]
		Because
		\[
		|\mathcal B|
		=
		O(\log m),
		\]
		it follows that
		\[
		\sum_{k\in\mathcal B}
		C_m^{(k)}(d)
		=
		O(\pi_m\log m).
		\]
		
		Combining the contributions from $\mathcal G$ and $\mathcal B$ yields
		\[
		C_m(d)
		=
		\frac{m\pi_m}{10}
		+
		O(\pi_m\log m).
		\]
		Since
		\[
		M_m=m\pi_m,
		\]
		we conclude that
		\[
		C_m(d)
		=
		\frac{M_m}{10}
		+
		O(\pi_m\log m),
		\]
		as claimed.
	\end{proof}
	
	\section{Assembly Across Shells}\label{sec:assembly}
	
	\begin{lemma}[Geometric growth of shell mass]\label{lem:geom}
		$M_m\sim \dfrac{9\cdot 10^{m-1}}{\ln 10}$ as $m\to\infty$, and consequently
		$N_n=\sum_{m=1}^n M_m \asymp M_n\asymp 10^n$.
	\end{lemma}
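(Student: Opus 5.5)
The plan is to derive the asymptotic for $M_m$ directly from the Prime Number Theorem, and then to sum a geometric-type series to get $N_n$.

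\textbf{Step 1: asymptotic for $\pi_m$.} By the Prime Number Theorem in the form $\pi(x)\sim x/\ln x$,
\[
\pi(10^m)\sim\frac{10^m}{m\ln 10},\qquad \pi(10^{m-1})\sim\frac{10^{m-1}}{(m-1)\ln 10}.
\]
Since $10^{m-1}/((m-1)\ln 10)=\tfrac{1}{10}\cdot\tfrac{m}{m-1}\cdot\tfrac{10^m}{m\ln 10}$ and $m/(m-1)\to1$, the second term is asymptotically one tenth of the first. Subtracting gives
\[
\pi_m=\pi(10^m)-\pi(10^{m-1})\sim\Bigl(1-\tfrac1{10}\Bigr)\frac{10^m}{m\ln 10}=\frac{9\cdot 10^{m-1}}{m\ln 10}.
\]
The subtraction is legitimate because the two terms have ratio tending to $1/10\neq 1$. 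Hence there is no cancellation of leading terms: the $o(\cdot)$ errors of both terms are $o(10^m/m)$, which is negligible against a main term of the same order. Multiplying by $m$ then gives $M_m=m\pi_m\sim 9\cdot 10^{m-1}/\ln 10$.

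\textbf{Step 2: $N_n\asymp M_n$.} From Step 1 there is $m_0$ with
\[
\tfrac12 c\,10^{m-1}\le M_m\le 2c\,10^{m-1}\quad\text{for } m\ge m_0,\qquad c=9/\ln 10.
\]
For the finitely many $m<m_0$ we have $M_m=O(1)$, with the constant depending only on $m_0$, which is absolute. The upper bound then follows from the geometric series:
\[
N_n\le O(1)+2c\sum_{m\le n}10^{m-1}\le O(1)+\tfrac{20c}{9}\,10^{n-1}\ll 10^n.
\]
The lower bound is immediate, since $N_n\ge M_n\gg 10^{n}$ once $n\ge m_0$. Combined with $M_n\asymp 10^n$ from Step 1, this gives $N_n\asymp M_n\asymp 10^n$.

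\textbf{Main obstacle.} The only delicate point is justifying the subtraction in Step 1 of two PNT asymptotics, each with only a relative $o(1)$ error. This is handled by the observation above that the terms differ by a fixed factor of ten, so the error stays relatively small. Equivalently, one could use $\pi(x)=x/\ln x+O(x/\ln^2 x)$ to obtain an explicit error $O(10^m/m^2)$ for $\pi_m$. Everything else is routine summation of a geometric series.
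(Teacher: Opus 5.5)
Your proof is correct and follows essentially the same route as the paper. You apply the Prime Number Theorem at both endpoints $10^m$ and $10^{m-1}$, use that their ratio tends to $1/10$ to get $\pi_m\sim 9\cdot 10^{m-1}/(m\ln 10)$, and then bound $N_n$ by a geometric series. The differences are cosmetic: you bound each $M_m\le 2c\,10^{m-1}$ directly, whereas the paper iterates the ratio bound $M_{m-1}\le\frac15 M_m$. You also justify the subtraction of the two PNT asymptotics more explicitly than the paper does.
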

	
	\begin{proof}
		By definition,
		\[
		\pi_m
		=
		\pi(10^m)-\pi(10^{m-1}),
		\]
		where $\pi(x)$ denotes the prime-counting function. The Prime Number Theorem gives
		\[
		\pi(x)
		\sim
		\frac{x}{\log x},
		\qquad x\to\infty,
		\]
		and therefore
		\[
		\pi_m
		\sim
		\frac{10^m}{m\log10}
		-
		\frac{10^{m-1}}{(m-1)\log10}.
		\]
		Since
		\[
		\frac{10^{m-1}/((m-1)\log10)}
		{10^m/(m\log10)}
		=
		\frac{m}{10(m-1)}
		\longrightarrow
		\frac1{10},
		\]
		it follows that
		\[
		\pi_m
		\sim
		\frac{10^m}{m\log10}
		\left(1-\frac1{10}\right)
		=
		\frac{9\cdot10^{m-1}}{m\log10}.
		\]
		
		Recalling that
		\[
		M_m=m\pi_m,
		\]
		we obtain
		\[
		M_m
		\sim
		\frac{9\cdot10^{m-1}}{\log10},
		\]
		and hence
		\[
		M_m\asymp10^m.
		\]
		
		It follows that
		\[
		\frac{M_{m-1}}{M_m}
		\longrightarrow
		\frac1{10}.
		\]
		Consequently, there exists $m_0$ such that
		\[
		M_{m-1}\le\frac15M_m,
		\qquad m\ge m_0.
		\]
		Iterating this inequality gives
		\[
		M_{n-j}
		\le
		\left(\frac15\right)^jM_n,
		\qquad
		0\le j\le n-m_0.
		\]
		Therefore,
		\[
		\sum_{m=1}^{n}M_m
		=
		\sum_{m=1}^{m_0-1}M_m
		+
		\sum_{j=0}^{n-m_0}M_{n-j}
		\le
		O(1)
		+
		M_n\sum_{j=0}^{\infty}\left(\frac15\right)^j
		=
		O(M_n).
		\]
		Since trivially
		\[
		N_n=\sum_{m=1}^{n}M_m\ge M_n,
		\]
		we conclude that
		\[
		N_n=O(M_n).
		\]
		Finally,
		\[
		M_n\asymp10^n,
		\]
		so that
		\[
		N_n=O(10^n),
		\]
		as claimed.
	\end{proof}
	
	\begin{proof}[Proof of Theorem \ref{thm:main}]
		By Lemma~\ref{lem:shell},
		\[
		C_m(d)
		=
		\frac{M_m}{10}
		+
		O(\pi_m\log m),
		\]
		where
		\[
		M_m=m\pi_m.
		\]
		Summing over all shells $1\le m\le n$ gives
		\[
		\begin{aligned}
			C_n(d)
			&=
			\sum_{m=1}^{n}C_m(d)  \\
			&=
			\frac{1}{10}\sum_{m=1}^{n}M_m
			+
			O\!\left(
			\sum_{m=1}^{n}\pi_m\log m
			\right).
		\end{aligned}
		\]
		Since
		\[
		N_n=\sum_{m=1}^{n}M_m,
		\]
		it remains to estimate the error term. As $\log m\le\log n$ for $m\le n$,
		\[
		\sum_{m=1}^{n}\pi_m\log m
		\le
		(\log n)\sum_{m=1}^{n}\pi_m.
		\]
		The shells partition the primes below $10^n$, so
		\[
		\sum_{m=1}^{n}\pi_m
		=
		\pi(10^n).
		\]
		Hence
		\[
		C_n(d)
		=
		\frac{N_n}{10}
		+
		O\!\left(\pi(10^n)\log n\right).
		\]
		
		By the Prime Number Theorem,
		\[
		\pi(10^n)
		\ll
		\frac{10^n}{n},
		\]
		while Lemma~\ref{lem:geom} gives
		\[
		N_n=O(10^n).
		\]
		Therefore
		\[
		\pi(10^n)
		\ll
		\frac{N_n}{n},
		\]
		and consequently
		\[
		C_n(d)
		=
		\frac{N_n}{10}
		+
		O\!\left(
		\frac{N_n\log n}{n}
		\right).
		\]
		Dividing by $N_n$ yields
		\[
		P_n(d)
		=
		\frac{C_n(d)}{N_n}
		=
		\frac{1}{10}
		+
		O\!\left(
		\frac{\log n}{n}
		\right).
		\]
		Since $(\log n)/n\to0$ as $n\to\infty$, it follows that
		\[
		\lim_{n\to\infty}P_n(d)
		=
		\frac{1}{10},
		\]
		which proves the theorem.
	\end{proof}
	\section{Conclusion}\label{sec:conclusion}
	
	The equidistribution statement proved in this paper (Theorem~\ref{thm:main}) is, we
	emphasise, an averaged one. It asserts that the digit-$d$ frequency, pooled over all
	digit positions and over all primes below $10^n$, converges with a power-saving error
	term. The proof isolates a band of at most $O(\log n)$ exceptional positions among the
	$n$ available at scale $10^n$; every position outside this band is shown individually
	to be equidistributed (Lemma~\ref{lem:interior}), and the exceptional band is shown to
	carry a vanishing proportion of the total digit mass. At no point does the argument
	require, or establish, that a fixed digit position is equidistributed in the limit
	$n \to \infty$ with that position held fixed. Readers should bear this distinction in
	mind when assessing the scope of the result. Three questions that a stronger theorem would need to answer remain open.\\
	
	The first is pointwise normality. Whether, for a fixed position $k$, the digit-$d$
	frequency among primes converges as one lets the prime grow, with $k$ fixed rather than
	scaled with the prime. This is equivalent to a statement of Borel normality for the
	primes in base ten, and it is a substantially harder proposition than the one proved
	here. The closest unconditional results in the literature treat related but distinct
	statistics: Mauduit and Rivat established equidistribution of the digit \emph{sum} of
	primes modulo $m$~\cite{MauduitRivat2010}, and Maynard proved the existence of primes
	with a missing digit and, separately, with a prescribed proportion of digits fixed
	in advance~\cite{Maynard2016}. Neither result, nor any combination of them
	known to us, implies pointwise normality in a fixed position.\\
	
	The second is the joint distribution of digits. Theorem~\ref{thm:main} is a
	one-dimensional marginal statement; it says nothing about the joint law of digit pairs,
	longer blocks, or runs, and no claim to that effect should be inferred from it.\\
	
	The third concerns the effective range of the argument. The exceptional set of size
	$O(\log n)$ and the error exponent $\log n / n$ both arise from a direct application of
	Vaughan's identity to the minor arcs, and neither constant should be regarded as
	optimal. The sharper minor-arc treatments used by Mauduit and Rivat and by Maynard --
	Harman's sieve, together with finer bilinear-sum estimates, suggest that the true
	exceptional band may be as small as $O(1)$ or $O(\log \log n)$ positions, with a
	correspondingly smaller error term. We regard this as a bounded technical problem
	rather than a conceptual obstruction, and we expect it to yield to the existing
	machinery in the hands of anyone willing to carry out the requisite bilinear-sum
	estimates.\\
	
	We close by noting that the gap between the averaged statement proved here and the
	pointwise statement that remains open is not a matter of degree. The methods available
	at present (exponential sums over primes combined with the Fourier structure of
	digitally defined sequences) control statistics that are, in an appropriate sense,
	integrated over position or over scale. Controlling a single fixed digit position in
	the limit appears to require genuinely new input, and we do not see how to obtain it
	from the techniques of this paper.
	
	\newpage
	\nocite{*}
	\bibliographystyle{unsrt}
	\bibliography{reference}
 
\end{document}